\numberwithin{equation}{section}
\newtheorem{theorem}{Theorem}[section] %
\newtheorem{lemma}[theorem]{Lemma} %
\newtheorem{corollary}[theorem]{Corollary} %
\begin{document}
\title{On the denominators of harmonic numbers \footnote{This work was
supported by the National Natural Science Foundation of China (No.
11771211) and a project funded by the Priority Academic Program
Development of Jiangsu Higher Education Institutions.} }

\author{  Bing-Ling Wu and  Yong-Gao Chen\footnote{Corresponding author, E-mail: 390712592@qq.com (B.-L. Wu),
 ygchen@njnu.edu.cn(Y.-G. Chen) } \\
\small  School of Mathematical Sciences and Institute of Mathematics, \\
\small  Nanjing Normal University,  Nanjing  210023,  P. R. China
}
\date{}
\maketitle \baselineskip 18pt \maketitle \baselineskip 18pt

{\bf Abstract.} Let $H_n$ be the $n$-th harmonic number and let
$v_n$ be its denominator.  It is well known that $v_n$ is even for
every integer $n\ge 2$. In this paper, we study the properties of
$v_n$. One of our results is:  the set of positive integers $n$
such that $v_n$ is divisible by the least common multiple of $1,
2, \cdots, \lfloor {n^{1/4}}\rfloor $ has density one. In
particular, for any positive integer $m$, the set of positive
integers $n$ such that $v_n$ is divisible by $m$ has density one.

 \vskip 3mm
 {\bf 2010 Mathematics Subject Classification:} 11B75, 11B83

 {\bf Keywords and phrases:} Harmonic numbers; $p$-adic valuation; Asymptotic density

\vskip 5mm

\section{Introduction}

For any positive integer $n$, let
$$ H_n=1+\frac 12 +\frac 13 +\cdots +\frac 1n=\frac{u_n}{v_n}, \quad (u_n, v_n)=1, \ v_n>0.$$
The number $H_n$ is called \emph{$n$-th harmonic number}. In 1991,
Eswarathasan and Levine \cite{Eswarathasan} introduced $I_p$ and
$J_p$. For any prime number $p$, let $J_p$ be the set of positive
 integers
$n$ such that $p\mid u_n$ and let $I_p$ be the set of positive
 integers
$n$ such that $p\nmid v_n$. Here $I_p$ and $J_p$ are slightly
different from those in \cite{Eswarathasan}. In
\cite{Eswarathasan}, Eswarathasan and Levine considered $0\in I_p$
and $0\in J_p$. It is clear that $J_p\subseteq I_p$.

In 1991, Eswarathasan and Levine \cite{Eswarathasan}  conjectured
that $J_p$ is finite for any prime number $p$. In 1994, Boyd
\cite{Boyd} confirmed that $J_p$ is finite for $p\le 547$ except
$83, 127, 397$. For any set $S$ of positive integers, let
$S(x)=|S\cap [1, x]|$. In 2016, Sanna \cite{Sanna} proved that
$$J_p (x)\le 129 p^{\frac 23}x^{0.765}.$$  Recently, Wu and Chen
\cite{Wu} proved that \begin{equation}\label{eqn1}J_p (x)\le
3x^{\frac 23+\frac 1{25\log p}}.\end{equation}

For any positive integer $m$, let $I_m$ be the set of positive
 integers $n$ such that $m\nmid v_n$. In this paper, the following results are proved.

\begin{theorem}\label{thm2} The set of positive integers $n$
such that $v_n$ is divisible by the least common multiple of $1,
2, \cdots, \lfloor {n^{1/4}}\rfloor $ has density one.
\end{theorem}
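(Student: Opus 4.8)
The plan is to reduce the divisibility condition to membership in the sets $J_p$ and then to feed in the bound \eqref{eqn1}. The engine is an elementary valuation identity. Fix a prime $p$ and write $H_n=\frac1pH_{\lfloor n/p\rfloor}+R$, where $R=\sum_{k\le n,\,p\nmid k}1/k$ is a $p$-adic integer; iterating this $b$ times gives
\[
H_n=\frac{1}{p^{b}}H_{\lfloor n/p^{b}\rfloor}+\sum_{i=0}^{b-1}\frac{1}{p^{i}}\Bigl(\sum_{\substack{k\le\lfloor n/p^{i}\rfloor\\ p\nmid k}}\frac1k\Bigr),
\]
in which every correction term has $p$-adic valuation at least $-(b-1)$. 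Since $\lfloor n/p^{b}\rfloor\in J_p$ means exactly $v_p\bigl(H_{\lfloor n/p^{b}\rfloor}\bigr)\ge 1$, comparing valuations yields, for every $b\le\lfloor\log_p n\rfloor$, the clean equivalence $p^{b}\mid v_n\iff\lfloor n/p^{b}\rfloor\notin J_p$.

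Next I would translate the theorem. Writing $L=\lfloor n^{1/4}\rfloor$ and $b_p=\lfloor\log_p L\rfloor$, divisibility of $v_n$ by $\operatorname{lcm}(1,\dots,L)=\prod_{p\le L}p^{b_p}$ fails precisely when $\lfloor n/p^{b_p}\rfloor\in J_p$ for some prime $p\le L$. Hence the exceptional set satisfies $E(x)\le\sum_{p\le x^{1/4}}E_p(x)$, where $E_p(x)=\#\{n\le x:\lfloor n/p^{b_p}\rfloor\in J_p\}$. For a single $p$, an exceptional $n$ has $N:=\lfloor n/p^{b_p}\rfloor\in J_p$ with $p^{b_p}\asymp n^{1/4}$ and $N\asymp n^{3/4}$; there are at most $p^{b_p}\le x^{1/4}$ integers $n\le x$ producing a fixed $N$, and $N$ ranges up to $x^{3/4+o(1)}$. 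Summing the resulting geometric series in $b_p$, which is dominated by its top term $b_p\approx\frac{\log x}{4\log p}$, and inserting \eqref{eqn1} gives, for an absolute constant $C$,
\[
E_p(x)\ \le\ C\,x^{1/4}\bigl(x^{3/4}\bigr)^{\,2/3+\frac{1}{25\log p}}\ =\ C\,x^{\,3/4+\frac{3}{100\log p}}.
\]

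Finally I would sum over $p$, exploiting that the excess exponent $\frac{3}{100\log p}$ is negligible except for very small $p$, and so splitting the primes at $x^{1/10}$. For $p\le x^{1/10}$ there are at most $x^{1/10}$ primes and each term is at most $Cx^{3/4+\frac{3}{100\log 2}}\le Cx^{0.80}$, contributing $O(x^{0.90})$. For $x^{1/10}<p\le x^{1/4}$ one has $x^{3/(100\log p)}\le e^{3/10}$, so each term is $O(x^{3/4})$ while the $\pi(x^{1/4})\ll x^{1/4}/\log x$ primes contribute $O(x/\log x)$. Both bounds are $o(x)$, whence $E(x)=o(x)$ and the theorem follows.

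I expect the main obstacle to be this summation over all primes $p\le n^{1/4}$, not the valuation identity. A careless estimate such as $\sum_{N\in J_p,\,N\le Y}N^{1/3}\le Y^{1/3}J_p(Y)$ with the crude range $Y\asymp p\,x^{3/4}$ introduces a spurious factor $p^{1/3+\theta_p}\approx p$ per prime, and $\sum_{p\le x^{1/4}}p$ already reaches $x^{1/2}$, pushing the total to $x^{5/4}$ and destroying the argument. The decisive point is that the large values of $N$ are paired with correspondingly small weights $p^{b_p}$, so the dominant contribution carries no extra power of $p$; this, combined with the fact that the exponent in \eqref{eqn1} tends to the critical value $2/3$ (so that $\pi(x^{1/4})\cdot x^{3/4}$ only just fails to exceed $x$), is exactly what lets the sum close to $o(x)$.
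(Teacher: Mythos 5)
Your proposal is correct and takes essentially the same route as the paper: the same valuation identity reducing $p^{b}\mid v_n$ to $\lfloor n/p^{b}\rfloor\notin J_p$ (the paper's Lemma \ref{lem2a}), the same insertion of the bound \eqref{eqn1} giving a per-prime contribution of order $x^{3/4+O(1/\log p)}$, and the same split of the prime sum at $x^{0.1}$ to get $O(x/\log x)$. The only cosmetic difference is that the paper works with a single exponent $\alpha_p$ determined by $p^{\alpha_p}\le x^{1/4}<p^{\alpha_p+1}$ (via Lemma \ref{lem2}) rather than summing a geometric series over the $n$-dependent exponents $b_p$.
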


\begin{theorem}\label{thm1} For any positive integer $m$ and any positive real number $x$, we have
$$I_m(x)\le 4 m^{\frac 1
3}x^{{\frac{2}{3}}+\frac{1}{25\log q_m}},$$ where $q_m$ is the
least prime factor of $m$.
\end{theorem}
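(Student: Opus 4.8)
The plan is to pass to the $p$-adic valuation and reduce everything to the bound \eqref{eqn1} on $J_p$. First I would record the reformulation
$$ n\in I_m \iff m\nmid v_n \iff \exists\, p\mid m:\ v_p(v_n)<v_p(m), $$
so that $I_m=\bigcup_{p^a\,\|\,m}\{n: v_p(v_n)<a\}$, where $p^a\,\|\,m$ means $a=v_p(m)$. Since $v_p(v_n)=\max\!\big(0,-v_p(H_n)\big)$, the condition $v_p(v_n)<a$ is the same as $v_p(H_n)\ge -(a-1)$.

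The key tool is a descent step. Writing
$$ H_n=\sum_{\substack{1\le k\le n\\ p\nmid k}}\frac1k+\frac1p H_{\lfloor n/p\rfloor}, $$
the first sum is $p$-integral, so if $v_p(H_{\lfloor n/p\rfloor})\le -b$ (with $b\ge0$) the ultrametric inequality forces $v_p(H_n)=v_p(H_{\lfloor n/p\rfloor})-1\le -b-1$. Contrapositively, $v_p(H_n)\ge -b$ gives $v_p(H_{\lfloor n/p\rfloor})\ge -(b-1)$. Iterating $b+1$ times with $b=a-1$ yields, for $n\ge p^{a}$,
$$ v_p(H_n)\ge -(a-1)\ \Longrightarrow\ \big\lfloor n/p^{a}\big\rfloor\in J_p. $$
Counting at most $p^{a}$ values of $n$ above each element of $J_p$ and the values $n<p^{a}$ separately,
$$ \big|\{n\le x: v_p(v_n)<a\}\big|\le (p^{a}-1)+p^{a}J_p\!\big(x/p^{a}\big). $$
For a prime power $m=p^a$ this already proves the theorem: by \eqref{eqn1}, $p^{a}J_p(x/p^a)\le 3(p^{a})^{1/3}x^{2/3+1/(25\log p)}$, while for $x\ge p^a$ the small-$n$ term is $\le p^a\le m^{1/3}x^{2/3}$, producing the constant $4=1+3$ with $q_m=p$.

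For general $m$ I would sum the displayed bound over $p^{a}\,\|\,m$. The small-$n$ contributions total $\sum_{p^a\|m}(p^a-1)<m\le m^{1/3}x^{2/3}$ once $x\ge m$. The descent contributions total
$$ \sum_{p^a\|m}p^{a}J_p\!\big(x/p^{a}\big)\le 3\sum_{p^a\|m}(p^{a})^{1/3-1/(25\log p)}\,x^{2/3+1/(25\log p)} $$
by \eqref{eqn1}. Since $\log p\ge\log q_m$ and $\prod_{p^a\|m}p^{a}=m$, for large $x$ this sum is dominated by the $p=q_m$ term, which is at most $3m^{1/3}x^{2/3+1/(25\log q_m)}$. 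For the small-$x$ range I would instead use the trivial bound $I_m(x)\le\lfloor x\rfloor\le 4m^{1/3}x^{2/3+1/(25\log q_m)}$, which is valid as long as $x^{1/3-1/(25\log q_m)}\le 4m^{1/3}$.

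The main obstacle is precisely this bookkeeping. Because the least prime $q_m$ forces the weakest exponent while its own exceptional set may be tiny, a naive sum of the per-prime estimates overshoots the constant $4$: indeed $\sum_{p^a\|m}(p^a)^{1/3}$ can exceed $m^{1/3}$. The delicate point is that the larger-prime descent terms carry exponents $2/3+1/(25\log p)$ separated from $2/3+1/(25\log q_m)$ by a fixed gap, so past the (polynomially large in $m$) range covered by the trivial bound they become negligible against the slack left by the $q_m$-term. Showing that the "large-$x$ union regime" and the "small-$x$ trivial regime" overlap for \emph{every} $m$ — i.e. that this gap-versus-range comparison always leaves the clean constant $4$ — is the crux of the argument.
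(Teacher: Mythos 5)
Your reduction to prime powers and the descent step are sound and coincide with the paper's Lemma \ref{lem2a} and Lemma \ref{lem2}: the count $|\{n\le x:\nu_p(v_n)<a\}|\le (p^a-1)+p^aJ_p(x/p^a)$ combined with \eqref{eqn1} does settle the prime-power case with the constant $4$. But for composite $m$ your argument stops exactly where the work begins. You sum the per-prime-power bounds, concede that $\sum_{p^a\|m}(p^a)^{1/3}$ can exceed $m^{1/3}$, and then assert that the ``large-$x$ union regime'' and the ``small-$x$ trivial regime'' overlap for every $m$; you never prove this, and in fact it is false if one uses only \eqref{eqn1}. Take $m=6$: the trivial bound $I_6(x)\le x$ gives the claim only for $x\le (4\cdot 6^{1/3})^{1/(1/3-1/(25\log 2))}\approx 1.3\times 10^{3}$, while at that point your union bound reads roughly $3\cdot 2^{1/3}x^{2/3+1/(25\log 2)}+3\cdot 3^{1/3}x^{2/3+1/(25\log 3)}+3$, which numerically exceeds $4\cdot 6^{1/3}x^{2/3+1/(25\log 2)}$ and continues to do so until $x$ grows by a further factor of about $18$. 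So there is a genuine window of $x$ in which neither of your two regimes delivers the stated inequality.

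The paper closes this gap with two ideas absent from your proposal. First, instead of a union over all prime power divisors it inducts on $m$, splitting off only the \emph{least} prime power divisor $p^\alpha$ of $m=p^\alpha m_1$; this gives the two-term estimate $I_m(x)\le I_{m_1}(x)+I_{p^\alpha}(x)-(p^\alpha-1)$, and when $p^\alpha\ge 8$ (hence $m_1>p^\alpha\ge 8$) the elementary inequality $(p^\alpha)^{-1/3}+m_1^{-1/3}\le 1$ converts $4m_1^{1/3}+4(p^\alpha)^{1/3}$ into $4m^{1/3}$ with no loss of constant. Second, when $p^\alpha\in\{2,3,4,5,7\}$ no general estimate suffices, and the paper invokes the explicit data $J_2=\emptyset$ and the known finite sets $J_3$, $J_5$, $J_7$ from \cite{Eswarathasan} to show that $I_{p^\alpha}(x)-(p^\alpha-1)$ is either zero or at most $2x^{2/3}$, which is absorbed by the gap $4(m^{1/3}-m_1^{1/3})>2$. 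Without a substitute for both ingredients --- the two-term inductive splitting and the explicit small-prime information --- your approach does not recover the constant $4$.
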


From Theorem \ref{thm2} or Theorem \ref{thm1}, we immediately have
the following corollary.

\begin{corollary}\label{cor1} For any positive integer $m$, the set of
positive integers $n$ such that $m\mid v_n$ has density one.
\end{corollary}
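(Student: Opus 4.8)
The plan is to show that the complementary set $I_m$ has asymptotic density zero, since this is exactly the statement that $\{n : m\mid v_n\}$ has density one. The case $m=1$ is trivial, as $1\mid v_n$ for every $n$, so I may assume $m\ge 2$ throughout. The whole point of the corollary is that it is a formal consequence of either Theorem \ref{thm1} or Theorem \ref{thm2}, so I would present both deductions.

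First I would invoke Theorem \ref{thm1}. For a fixed integer $m\ge 2$ the least prime factor $q_m$ satisfies $q_m\ge 2$, hence $\log q_m\ge \log 2$ and the exponent obeys
$$\frac{2}{3}+\frac{1}{25\log q_m}\le \frac{2}{3}+\frac{1}{25\log 2}<1.$$
Setting $\delta=\delta(m)=\frac{1}{3}-\frac{1}{25\log q_m}>0$, Theorem \ref{thm1} yields $I_m(x)\le 4m^{1/3}x^{1-\delta}$. Since $m$ is fixed the factor $4m^{1/3}$ is a harmless constant, so $I_m(x)/x\to 0$ as $x\to\infty$; thus $I_m$ has density zero and the corollary follows.

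Alternatively, the corollary drops out of Theorem \ref{thm2} with no estimate on $I_m$ at all. The key observation is that once $n\ge m^4$ we have $m\le \lfloor n^{1/4}\rfloor$, so $m$ divides the least common multiple $L_n$ of $1,2,\ldots,\lfloor n^{1/4}\rfloor$; consequently $L_n\mid v_n$ forces $m\mid v_n$. Hence the density-one set supplied by Theorem \ref{thm2}, after removing the finitely many indices $n<m^4$, is contained in $\{n:m\mid v_n\}$. Deleting finitely many terms does not affect density, so $\{n:m\mid v_n\}$ has density one.

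I do not anticipate a genuine obstacle here: all the arithmetic content already resides in Theorems \ref{thm2} and \ref{thm1}, and the corollary is purely formal. The only point that needs (routine) verification is that the exponent in Theorem \ref{thm1} is strictly below $1$, equivalently that $25\log q_m>3$, which holds for every prime $q_m\ge 2$; this is what guarantees the savings $x^{-\delta}$ and hence the vanishing density.
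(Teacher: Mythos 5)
Your proof is correct and matches the paper's intent: the paper simply asserts that the corollary follows immediately from either Theorem \ref{thm1} or Theorem \ref{thm2}, and your two deductions (the exponent $\frac23+\frac1{25\log q_m}<1$ forcing $I_m(x)=o(x)$, and the observation that $m\mid\mathrm{lcm}(1,\dots,\lfloor n^{1/4}\rfloor)$ once $n\ge m^4$) are exactly the routine verifications being left to the reader. No gaps.
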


\section{Proofs}

We always use $p$ to denote a prime. Firstly, we give the
following two lemmas.

\begin{lemma}\label{lem2a} For any prime $p$ and any positive integer $k$, we have
$$I_{p^k}=\{p^k{n_1}+r : n_1\in J_p\cup \{ 0\} ,\  0\le r\le p^k-1 \} \setminus \{ 0\} .$$
\end{lemma}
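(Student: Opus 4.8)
The plan is to characterize $I_{p^k}$ by understanding exactly when $p^k \mid v_n$, equivalently when the $p$-adic valuation $\nu_p(H_n) \le -k$. Let me think about the structure here.

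The key insight connecting $I_{p^k}$ to $J_p$ comes from analyzing $\nu_p(H_n)$ via the base-$p$ expansion of $n$. Let me sketch the approach.

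First I would reduce everything to a statement about $p$-adic valuations. We have $p^k \mid v_n$ iff $\nu_p(H_n) \le -k$, and $p^k \nmid v_n$ iff $\nu_p(H_n) \ge -(k-1)$, i.e. $\nu_p(H_n) > -k$. So $n \in I_{p^k}$ exactly when $\nu_p(H_n) \ge 1-k$. I want to show this holds precisely for $n$ of the form $p^k n_1 + r$ with $n_1 \in J_p \cup \{0\}$ and $0 \le r \le p^k - 1$ (excluding $n=0$).

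Let me write $n = p^k n_1 + r$ with $0 \le r < p^k$, so $n_1 = \lfloor n/p^k \rfloor$. The natural thing is to split the harmonic sum by $p$-adic valuation of the terms $1/j$:
$$H_n = \sum_{j=1}^n \frac 1j.$$
The terms $1/j$ with $p^k \mid j$ contribute $\frac{1}{p^k}\sum_{i=1}^{n_1} \frac 1i = \frac{1}{p^k} H_{n_1}$, which has valuation $-k + \nu_p(H_{n_1})$. All remaining terms $1/j$ with $\nu_p(j) < k$ have valuation $> -k$. The claim will be that the high-valuation part governs everything: $\nu_p(H_n) \ge 1-k$ iff the $-k$-contribution from the block $\frac{1}{p^k}H_{n_1}$ is cancelled, i.e. iff $\nu_p(H_{n_1}) \ge 1$, which is exactly $n_1 \in J_p \cup \{0\}$ (since $H_0 = 0$ has infinite valuation, handling the $n_1 = 0$ case, i.e. $n < p^k$). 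The subtlety is that I must confirm the lower-valuation terms cannot themselves sum to something of valuation $\le -k$; since each such term has valuation strictly greater than $-k$ and valuations are integers, any $\mathbb{Z}_p$-combination of them has valuation $\ge 1-k$, so they are harmless — they can only affect whether $\nu_p(H_n)$ is $\ge 1-k$ through the single critical term.

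The main obstacle I anticipate is making the cancellation argument airtight, because it is not quite true that $\nu_p(H_n) = \min$ of the valuations of the two parts when those valuations coincide. The cleanest route is: write $H_n = \frac{1}{p^k}H_{n_1} + R$ where $R = \sum_{j \le n,\, p^k \nmid j} 1/j$ has $\nu_p(R) \ge 1-k$. Then $\nu_p(H_n) \ge 1-k$ iff $\nu_p\bigl(\frac{1}{p^k}H_{n_1}\bigr) \ge 1-k$ iff $\nu_p(H_{n_1}) \ge 1$ iff $p \mid u_{n_1}$ iff $n_1 \in J_p$ (or $n_1 = 0$). This equivalence uses that $R$ and the critical term both live in $p^{1-k}\mathbb{Z}_p$ when $n_1 \in J_p \cup\{0\}$, and that if $n_1 \notin J_p \cup \{0\}$ then $\nu_p\bigl(\frac{1}{p^k}H_{n_1}\bigr) = -k < 1-k \le \nu_p(R)$ forces $\nu_p(H_n) = -k$, i.e. $n \notin I_{p^k}$. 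I would finish by checking the endpoint conventions: $r$ ranges over a full residue system $0 \le r \le p^k-1$ because $r$ does not affect $n_1 = \lfloor n/p^k\rfloor$, and the explicit removal of $\{0\}$ accounts for the forbidden value $n=0$ (arising when $n_1 = 0, r = 0$). This completes the characterization.
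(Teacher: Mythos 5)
Your argument is correct and is essentially the paper's own proof: both split $H_n$ into $\frac{1}{p^k}H_{n_1}$ plus the sum over indices not divisible by $p^k$ (which has valuation $\ge 1-k$) and then read off $\nu_p(H_n)$ from whether $p\mid u_{n_1}$. One cosmetic slip: when $n_1\notin J_p\cup\{0\}$ you assert $\nu_p\bigl(\frac{1}{p^k}H_{n_1}\bigr)=-k$, whereas it is only $\le -k$ (it is smaller when $p\mid v_{n_1}$), but the ultrametric comparison with $\nu_p(R)\ge 1-k$ still yields $\nu_p(H_n)\le -k$, so the conclusion stands.
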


\begin{proof} For any integer $a$, let $\nu_p (a)$ be the $p$-adic
valuation of $a$. For any rational number $\alpha =\frac ab$, let
$\nu_p (\alpha )= \nu_p (a) -\nu_p (b)$. It is clear that $n\in
I_{p^k}$ if and only if $\nu_p (H_n)>-k$.

If $n<p^k$, then $\nu_p (H_n)\ge -\nu_p([1, 2, \cdots, n])>-k$. So
$n\in I_{p^k}$. In the following, we assume that $n\ge p^k$. Let
$$n=p^k n_1 +r,\quad  0\le r\le p^k-1,\ n_1, r\in \mathbb{Z}.$$
Then $n_1\ge 1$. Write \begin{equation}\label{eqn2}H_n=\sum_{m=1,
p^k\nmid m}^n \frac 1m +\frac 1{p^k} H_{n_1}=\frac b{p^{k-1}a}
+\frac
{u_{n_1}}{p^kv_{n_1}}=\frac{pbv_{n_1}+au_{n_1}}{p^kav_{n_1}},\end{equation}
where $p\nmid a$ and $(u_{n_1}, v_{n_1})=1$.

If ${n_1}\in J_p$, then $p\mid u_{n_1}$ and $p\nmid v_{n_1}$. Thus
$p\mid au_{n_1}+pbv_{n_1}$ and $\nu_p (p^kav_{n_1})=k$. By
\eqref{eqn2}, $\nu_p (H_n)>-k $.  So $n\in I_{p^k}$.

If ${n_1}\notin J_p$, then $p\nmid u_{n_1}$. Thus $p\nmid
au_{n_1}+pbv_{n_1}$. It follows from \eqref{eqn2} that $\nu_p
(H_n)\le -k$.  So $n\notin I_{p^k}$.

Now we have proved that $n\in I_{p^k}$ if and only if ${n_1}\in
J_p\cup \{ 0\}$.

This completes the proof of Lemma \ref{lem2a}.
\end{proof}

\begin{lemma}\label{lem2} For any prime power $p^k$ and any positive number $x$, we have
$$I_{p^k}(x)\le
4(p^k)^{{\frac{1}{3}}-\frac{1}{25\log
p}}x^{{\frac{2}{3}}+\frac{1}{25\log p}}.$$
\end{lemma}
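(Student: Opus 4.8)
The plan is to use the explicit description of $I_{p^k}$ from Lemma \ref{lem2a} together with the bound \eqref{eqn1} on $J_p(x)$. By Lemma \ref{lem2a}, every $n\in I_{p^k}$ can be written uniquely as $n=p^kn_1+r$ with $n_1\in J_p\cup\{0\}$ and $0\le r\le p^k-1$. To count $I_{p^k}\cap[1,x]$ I would first observe that if $n\le x$ then necessarily $n_1=\lfloor n/p^k\rfloor\le x/p^k$. Splitting according to the value of $n_1$: the choice $n_1=0$ contributes at most $p^k-1$ integers (namely $1,\dots,p^k-1$), while for each $n_1\in J_p$ with $n_1\le x/p^k$ there are at most $p^k$ admissible residues $r$. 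This yields the clean counting inequality
$$I_{p^k}(x)\le p^k\,J_p\!\left(\tfrac{x}{p^k}\right)+\bigl(p^k-1\bigr).$$

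Next I would insert \eqref{eqn1} with argument $x/p^k$, giving $J_p(x/p^k)\le 3(x/p^k)^{2/3+1/(25\log p)}$. Multiplying by $p^k$ and collecting exponents, the factor $p^k$ combines with $(p^k)^{-2/3-1/(25\log p)}$ to produce exactly $(p^k)^{1/3-1/(25\log p)}$, so the main term becomes $3(p^k)^{1/3-1/(25\log p)}x^{2/3+1/(25\log p)}$, which already matches the target shape with constant $3$.

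The one point requiring care is the secondary term $p^k-1$, which must be absorbed to pass from constant $3$ to constant $4$. I would handle this by a short case distinction. If $x\ge p^k$, then using the exponent identity $1=\bigl(\tfrac13-\tfrac1{25\log p}\bigr)+\bigl(\tfrac23+\tfrac1{25\log p}\bigr)$ one checks that $p^k\le (p^k)^{1/3-1/(25\log p)}x^{2/3+1/(25\log p)}$, so the secondary term is dominated by one further copy of the main term and the constant increases from $3$ to $4$. If instead $x<p^k$, then by the first part of the proof of Lemma \ref{lem2a} every positive integer $n\le x$ lies in $I_{p^k}$, so $I_{p^k}(x)\le x$; since $\tfrac13-\tfrac1{25\log p}>0$ for all primes $p$, the inequality $x^{1/3-1/(25\log p)}<(p^k)^{1/3-1/(25\log p)}$ gives $x\le (p^k)^{1/3-1/(25\log p)}x^{2/3+1/(25\log p)}$, which is even stronger than required.

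The main obstacle is essentially bookkeeping rather than genuine difficulty: once Lemma \ref{lem2a} and \eqref{eqn1} are in hand the estimate is forced, and the only subtlety is verifying that the lower-order contribution $p^k$, combined with the strict positivity of the exponent $\tfrac13-\tfrac1{25\log p}$, allows the final constant to remain $4$ uniformly in both $p$ and $k$.
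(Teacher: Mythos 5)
Your proposal is correct and follows essentially the same route as the paper: both use the structure of $I_{p^k}$ from Lemma \ref{lem2a} to reduce the count to $p^k\bigl(J_p(x/p^k)+1\bigr)$, apply the bound \eqref{eqn1}, and dispose of the range $x<p^k$ (respectively $x\le p^k$) by the trivial estimate $I_{p^k}(x)\le x$ together with the positivity of the exponent $\frac13-\frac1{25\log p}$. The only difference is cosmetic bookkeeping in how the $n_1=0$ term is separated.
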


\begin{proof}

If $x\le p^k$, then
$$I_{p^k}(x)\le x< 4x^{{\frac{1}{3}}-\frac{1}{25\log
p}}x ^{{\frac{2}{3}}+\frac{1}{25\log p}} \le
4(p^k)^{{\frac{1}{3}}-\frac{1}{25\log
p}}x^{{\frac{2}{3}}+\frac{1}{25\log p}}.$$ Now we assume that
$x>p^k$. By  Lemma \ref{lem2a} and \eqref{eqn1}, we have
\begin{eqnarray*}
I_{p^k}(x)&=&|\{p^k{n_1}+r\le x: n_1\in J_p\cup \{ 0\} ,\ 0\le
r\le p^k-1 \}| -1\\&\le& p^k\left(
{J_p(\frac{x}{p^k})}+1\right)\le
4(p^k)^{{\frac{1}{3}}-\frac{1}{25\log
p}}x^{{\frac{2}{3}}+\frac{1}{25\log p}}..
\end{eqnarray*}

This completes the proof of Lemma \ref{lem2}.
\end{proof}

\begin{proof}[Proof of Theorem \ref{thm2}]
Let $m_n$ be the least common multiple of $1, 2, \cdots, \lfloor
{n^{\theta}}\rfloor $ and let $\lfloor {n^{\theta}}\rfloor$ denote
the greatest integer not exceeding the real number $n^{\theta}$,
where $0<\theta <1 $ which will be given later. Let $T=\{n:
{m_n}\nmid{v_n}\}$. For any prime $p$ and any positive number $x$
with $p\le x^{\theta}$, let $\alpha_p$ be the integer such that
$p^{\alpha_p}\le x^{\theta}<p^{\alpha_p+1}$.

By the definition of $m_n$ and $T$,
$$T(x)\le\sum_{p\le x^{\theta}}I_{p^{\alpha_p}}(x).$$
Thus, by  Lemma \ref{lem2}, we have
$$
\sum_{p\le x^{\theta}}I_{p^{\alpha_p}}(x)\le 4\sum_{p\le
x^{\theta}}(p^{\alpha_p})^{\frac{1}{3}}x^{{\frac{2}{3}}+\frac{1}{25\log
p}} :=I_1+I_2,$$ where $$ I_1= 4\sum_{x^{\delta}<p\le
x^{\theta}}(p^{\alpha_p})^{\frac{1}{3}}x^{{\frac{2}{3}}+\frac{1}{25\log
p}},\quad I_2= 4\sum_{p\le
x^{\delta}}(p^{\alpha_p})^{\frac{1}{3}}x^{{\frac{2}{3}}+\frac{1}{25\log
p}} $$ and $\delta$ is a positive constant less than $\theta$
which will be given later.

If $p>x^{\delta}$, then
\begin{eqnarray*}
x^{\frac{1}{25\log p}}=e^{\frac{\log x}{25\log p}}\le
e^{\frac{\log x}{25{\delta}\log x}}=e^{\frac{1}{25{\delta}}}.
\end{eqnarray*}
It follows from $p^{\alpha_p}\le x^{\theta}$  that
\begin{eqnarray*}
I_1=4\sum_{x^{\delta}<p\le
x^{\theta}}(p^{\alpha_p})^{\frac{1}{3}}x^{{\frac{2}{3}}+\frac{1}{25\log
p}}  \le  4 e^{\frac{1}{25\delta}}\sum_{x^{\delta}<p\le
x^{\theta}} x^{\frac{\theta}{3}+\frac{2}{3}}\ll \frac{1}{\log x}
x^{\frac{4\theta}{3}+\frac{2}{3}}.
\end{eqnarray*}

For $I_2$, by $p^{\alpha_p}\le x^{\theta}$  we have
\begin{eqnarray*}
I_2 &=& 4\sum_{p\le x^{\delta}} (p^{\alpha_p})^{\frac{1}{3}}
x^{\frac{2}{3}+\frac{1}{25\log p}}\\
&\le&4\sum_{p\le x^{\delta}}
x^{\frac{\theta}{3}+\frac{2}{3}+\frac{1}{25\log 2}}\\
&\ll & \frac{1}{ \log
x}x^{\delta+{\frac{\theta}{3}}+{{\frac{2}{3}}+\frac{1}{25\log
2}}}.
\end{eqnarray*}

We choose  $\theta=\frac 1 4$ and $\delta=0.1$. Then
$$I_1\ll \frac x{\log x} ,\quad I_2\ll x^{0.91}.$$
Therefore, $$T(x)\le \sum_{p\le
x^{\theta}}I_{p^{\alpha_p}}(x)=I_1+I_2\ll \frac x{\log x}.$$ It
follows that the set of positive integers $n$ such that $v_n$ is
divisible by the least common multiple of $1, 2, \cdots, \lfloor
{n^{1/4}}\rfloor $ has density one. This completes the proof of
Theorem \ref{thm2}.\end{proof}

\begin{proof}[Proof of Theorem \ref{thm1}]
We use induction on $m$ to prove Theorem \ref{thm1}.

By Lemma \ref{lem2}, Theorem \ref{thm1} is true for $m=2$. Suppose
that Theorem \ref{thm1} is true for all integers less than $m$
$(m>2)$.

If $x\le m$, then
$$I_m(x)\le x<4 x^{\frac 1
3}x^{{\frac{2}{3}}+\frac{1}{25\log {q_m}}}\le 4 m^{\frac 1
3}x^{{\frac{2}{3}}+\frac{1}{25\log {q_m}}}.$$

In the following, we always assume that $x>m$.

If $m$ is a prime power, then, by Lemma \ref{lem2}, Theorem
\ref{thm1} is true.  Now we assume that $m$ is not a prime power.
Let $p^\alpha $ be the least prime power divisor of $m$ such that
$m=p^\alpha m_1$ with $p\nmid m_1$. Then $m_1>p^\alpha $. It is
clear that $I_m=I_{m_1}\bigcup ( I_{p^\alpha } \setminus
I_{m_1})$. By Lemma \ref{lem2a} and the definition of $p^\alpha $,
$\{ 1,2, \cdots , p^\alpha -1\} \subseteq I_{m_1} $. Hence
$$I_m(x)=I_{m_1}(x)+( I_{p^\alpha } \setminus I_{m_1})(x) \le
I_{m_1}(x)+ I_{p^\alpha }(x)-(p^\alpha -1).$$ By the inductive
hypothesis, we have $$I_{m_1}(x) \le 4m_1^{\frac{1}{3}}
x^{{\frac{2}{3}}+\frac{1}{25\log q_{m_1}}}\le 4m_1^{\frac{1}{3}}
x^{{\frac{2}{3}}+\frac{1}{25\log q_m}}.$$ It follows that
\begin{equation}\label{eq1a0}I_m(x)\le 4 m_1^{\frac{1}{3}} x^{{\frac{2}{3}}+\frac{1}{25\log
q_m}}+ I_{p^\alpha } (x) -(p^\alpha -1).\end{equation}

We divide into the following three cases:

{\bf Case 1:} $p^\alpha\ge 8$. Then $m_1>p^\alpha \ge 8$. By Lemma
\ref{lem2}, we have
\begin{equation*}I_{p^\alpha}(x) \le 4(p^\alpha)^{\frac{1}{3}} x^{{\frac{2}{3}}+\frac{1}{25\log
q_m}}.\end{equation*} It follows from \eqref{eq1a0} that
\begin{eqnarray*}I_m(x)
&\le &4 m_1^{\frac{1}{3}} x^{{\frac{2}{3}}+\frac{1}{25\log
q_m}} + 4 (p^\alpha)^{\frac{1}{3}}
x^{{\frac{2}{3}}+\frac{1}{25\log q_m}}\\
&=& 4 \left( \frac 1{(p^\alpha)^{\frac{1}{3}}} +\frac
1{m_1^{\frac{1}{3}}} \right)  m^{\frac{1}{3}}
x^{{\frac{2}{3}}+\frac{1}{25\log q_m}}\\
&\le & 4 m^{\frac{1}{3}} x^{{\frac{2}{3}}+\frac{1}{25\log q_m}}.
\end{eqnarray*}

{\bf Case 2:} $p^\alpha<8$, $p=2$. Then  $p^\alpha=2$ or $4$ and
$x>m\ge 2\times 3= 6$. By Lemma \ref{lem2a} and $J_2=\emptyset$,
we have $I_4=\{ 1, 2, 3 \} $ and $I_2=\{ 1 \} $. It is clear that
$ I_{p^\alpha } (x) -(p^\alpha -1)=0$.
 It
follows from \eqref{eq1a0} that
\begin{eqnarray*}I_m(x)\le   4 m_1^{\frac{1}{3}} x^{{\frac{2}{3}}+\frac{1}{25\log
q_m}} < 4  m^{\frac{1}{3}} x^{{\frac{2}{3}}+\frac{1}{25\log q_m}}.
\end{eqnarray*}

{\bf Case 3:} $p^\alpha<8$, $p\neq 2$. Then  $\alpha=1$ and $p=3$,
$5$ or $7$. In addition, $x>m\ge 3\times 4= 12$. Noting that
$m^{\frac 1 3}-{m_1}^{\frac 1 3}={m_1}^{\frac 1 3}(p^{\frac
{\alpha}{3}}-1)\ge 4^{\frac 1 3}(3^{\frac 1 3}-1)>\frac 12$, by
\eqref{eq1a0}, it is enough to prove that $I_{p} (x) -(p -1)\le
2x^{\frac 2 3}$. By Lemma \ref{lem2a}, we have
$$I_{p}=\{p{n_1}+r : n_1\in J_p\cup \{ 0\} ,\  0\le r\le p-1 \} \setminus \{ 0\} .$$
 By
\cite{Eswarathasan}, $ J_3=\{2, 7, 22\}$, $ J_5=\{4, 20, 24\}$ and
$$J_7=\{6, 42, 48, 295, 299, 337, 341, 2096, 2390, 14675, 16731,
16735, 102728\} .$$ If $x\ge 7^3$, then $I_{p} (x) -(p -1)\le 91
\le 2x^{\frac 2 3}$. If $35<x< 7^3$, then $I_{p} (x) -(p -1)\le
21\le 2x^{\frac 2 3}$. If $12<x\le 35$, then $I_{p} (x) -(p -1)\le
6\le 2x^{\frac 2 3}$.

This completes the proof of  Theorem \ref{thm1}. \end{proof}

\renewcommand{\refname}{References}

\end{document}